\newtheorem{theorem}{Theorem}[section]
\newtheorem{lemma}[theorem]{Lemma}
\newtheorem{corollary}[theorem]{Corollary}
\theoremstyle{definition}
\newtheorem{definition}[theorem]{Definition}
\newtheorem{example}[theorem]{Example}
\newtheorem{question}[theorem]{Question}
\newtheorem{problem}[theorem]{Problem}
\newtheorem{remark}[theorem]{Remark}
\theoremstyle{remark}
\newtheorem{claim}{Claim}
\newcommand\N{\mathbb{N}}
\newcommand\cont{\mathfrak{c}}
\title{On the existence of kings in continuous tournaments}
\author[M. Nagao]{Masato Nagao}
\address{Division of Mathematics, Physics and Earth Sciences\\
Graduate School of Science and Engineering\\
Ehime University\\
Matsuyama 790-8577\\
Japan}
\email[M. Nagao]{masa.math@live.jp}
\email[D. Shakhmatov]{dmitri.shakhmatov@ehime-u.ac.jp}
\author[D. Shakhmatov]{Dmitri Shakhmatov}
\thanks{This manuscript was written as part of the first named author's 
doctoral studies at Ehime University.}
\thanks{The second named author was partially supported by the Grant-in-Aid for
Scientific Research (C) No.~22540089 by the Japan Society for the Promotion of Science (JSPS)}
\keywords{directed graph, tournament, king chicken theorem, compact space, weak selection, zero-dimensional, pseudocompact, analytic set}
\subjclass{Primary: 05C20; Secondary: 05C63, 05C69, 28A05, 54C05, 54C65, 54D30, 54F05, 54H05}
\begin{document}
\begin{abstract}
The classical result of Landau on the existence of kings in finite tournaments
(=~finite 
directed complete
graphs) is extended to continuous tournaments for which the set $X$ of players is a compact Hausdorff space. 
The following partial converse is proved as well.
Let $X$ be a Tychonoff space which is either
zero-dimensional or locally connected or pseudocompact or linearly ordered. If $X$ admits at least one continuous tournament and each continuous tournament on $X$ has a king,
then $X$ must be compact. We show that a complete reversal of our theorem is impossible, by giving an example of a 
dense connected subspace $Y$ of the unit square admitting precisely two continuous tournaments both of which have a king, yet $Y$ is not 
even analytic
(much less compact).
\end{abstract}

\maketitle

\section{The classical ``king chicken'' theorem of Landau}

For a set $X$, we use $[X]^2$ to denote the set of all two-element subsets of $X$. 
A {\em weak selection\/} on $X$ is a function $\varphi:[X]^2\to X$ such that $\varphi(\{a,b\})\in \{a,b\}$ for all $\{a,b\}\in [X]^2$.
Clearly, $\varphi$ defines a 
tournament (= a directed complete
graph) on the set $X$
in which a team $a\in X$ 
wins over a team $b\in X$ if and only if
$\varphi(\{a,b\})=b$.

In our terminology, the 
classical ``king chicken'' theorem of Landau (\cite{Landau}; see also \cite{Maurer}) 
about the existence of kings in 
finite tournaments (= finite 
directed complete
graphs)  
reads as follows.

\begin{theorem}
\label{classical:king:theorem}
Let $\varphi$ be a weak selection on a 
non-empty
finite set $X$.
Then there exists $z\in X$ such that, for every $x\in X\setminus\{z\}$, 
either 
$\varphi(\{x,z\})=x$
or one can find 
an element
$y\in X$
different from both $x$ and $z$
such that 
$\varphi(\{x,y\})=x$
and
$\varphi(\{y,z\})=y$.
\end{theorem}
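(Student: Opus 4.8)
The plan is to run the classical ``team with the most wins'' argument, taking care with the bookkeeping convention of this paper: recall that $\varphi(\{a,b\})=b$ encodes ``$a$ beats $b$'', so $\varphi$ returns the \emph{loser} of each game. For $a\in X$ write $W(a)=\{b\in X\setminus\{a\}:\varphi(\{a,b\})=b\}$ for the set of teams beaten by $a$. Since $X$ is non-empty and finite, I would pick $z\in X$ for which the non-negative integer $|W(z)|$ is as large as possible, and then show that this $z$ witnesses the conclusion. (The case $|X|=1$ is vacuous, so one may assume $|X|\ge 2$.)

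Fix an arbitrary $x\in X\setminus\{z\}$. If $\varphi(\{x,z\})=x$, then $z$ beats $x$ and we are in the first alternative, so assume instead $\varphi(\{x,z\})=z$, i.e.\ $x$ beats $z$. The heart of the matter is the claim that there is some $y\in W(z)$ with $\varphi(\{x,y\})=x$. Granting this, observe that $y\ne z$ since $y\in W(z)\subseteq X\setminus\{z\}$, and $y\ne x$ since otherwise $y=x\in W(z)$ would give $\varphi(\{x,z\})=\varphi(\{y,z\})=y=x$, contradicting $\varphi(\{x,z\})=z$ (recall $x\ne z$); moreover $y\in W(z)$ says exactly that $\varphi(\{y,z\})=y$. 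Hence $y$ is the intermediate team required by the second alternative, and $z$ is a king.

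It remains to establish the claim, and this is the only place the maximality of $z$ is used. Suppose no such $y$ exists. Then for every $y\in W(z)$ we have $\varphi(\{x,y\})\ne x$, and since $\varphi(\{x,y\})\in\{x,y\}$ this forces $\varphi(\{x,y\})=y$; that is, $x$ beats every team in $W(z)$, so $W(z)\subseteq W(x)$. But also $z\in W(x)$ (as $x$ beats $z$) while $z\notin W(z)$, whence $W(z)\cup\{z\}\subseteq W(x)$ and therefore $|W(x)|\ge|W(z)|+1>|W(z)|$, contradicting the choice of $z$. This proves the claim.

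I do not expect a real obstacle here: the argument is short, and the only delicate points are translating ``beats'' correctly through the loser-valued function $\varphi$ and verifying the two disjointness conditions $y\ne x$ and $y\ne z$; finiteness of $X$ is needed only to guarantee that a team maximizing $|W(\cdot)|$ exists.
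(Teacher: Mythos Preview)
Your proof is correct and is precisely the classical ``maximum out-degree'' argument for Landau's theorem: pick a vertex $z$ that beats the most opponents, and derive a contradiction from any $x$ that is not reachable from $z$ in at most two steps. The bookkeeping with the loser-valued convention $\varphi(\{a,b\})=b\Leftrightarrow a$ beats $b$ is handled accurately, and the verifications $y\ne x$, $y\ne z$ are clean. (A minor streamlining: once you assume $\varphi(\{x,z\})=z$ you already have $x\notin W(z)$, so every $y\in W(z)$ is automatically distinct from $x$ and all occurrences of $\varphi(\{x,y\})$ are well-defined; your separate check of $y\ne x$ is then redundant but harmless.)

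As for comparison with the paper: the paper does not supply a proof of this statement at all. Theorem~1.1 is stated as the classical result of Landau and attributed to \cite{Landau} (see also \cite{Maurer}); it is then used as a black box inside the proof of Theorem~2.3 (specifically, in Claim~2 there, to obtain the finite intersection property of the family $\{K_{\varphi,x}:x\in X\}$). So your argument is not an alternative to the paper's proof but rather a self-contained justification of a result the authors take for granted.
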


An element $z$ in the above theorem is called a {\em king\/} for the tournament $\varphi$.

We
extend this classical theorem from the finite case to all compact 
Hausdorff spaces $X$ equipped with a {\em continuous\/} weak selection $\varphi$;
see Theorem \ref{kings:in:compact:spaces}.
We also show that the compactness of $X$ is not only sufficient but often also a necessary condition for the existence of a king for every continuous tournament $\varphi$ on $X$; see Corollary \ref{linearly:ordered:king} and Theorem \ref{reverse:theorem}.
Finally, we give examples of non-compact separable metric spaces that nevertheless 
have kings for all continuous tournaments $\varphi$ on $X$; see 
Examples  \ref{example:2} and \ref{example}. The first example is complete, while 
the second example fails to be even analytic.

\section{Compact version of the ``king chicken'' theorem}

\begin{definition}
\label{def:main}
Let $\varphi$ be a weak selection on a set $X$.
\begin{itemize}
\item[(i)]
For $a,b\in X$, we write $a\to_\varphi b$ if either $a=b$ or $a\not=b$ and $\varphi(\{a,b\})=b$.
\item[(ii)]
We call an element $z\in X$ a {\em $\varphi$-king\/} if, 
for every $x\in X$, there exists $y\in X$ such that
$z\to_\varphi y\to_\varphi x$. 
\item[(iii)]
For every $x\in X$, define
\begin{equation}
\label{eq:K}
K_{\varphi,x}=\{z\in X: z\to_\varphi y\to_\varphi x
\mbox{ for some }
y\in X\}.
\end{equation}
Clearly, 
\begin{equation}
\label{eq:K:phi}
K_\varphi=\bigcap\{K_{\varphi,x}: x\in X\}
\end{equation}
is the set of all $\varphi$-kings.
\end{itemize}
\end{definition}

\begin{definition}
Let $X$ be a topological space.
\begin{itemize}
\item[(i)]
A weak selection $\varphi$ on $X$ is {\em continuous\/} provided that,
for every $a,b\in X$ with $a\not=b$ and each open neighbourhood 
$W$ of $\varphi(\{a,b\})$,
there exist an open neighbourhood $U$ of $a$ 
and an open neigbourhood $V$ of $b$ such that $\varphi(\{a',b'\})\in W$
whenever $a'\in U$, $b'\in V$ and $a'\not=b'$.
\item[(ii)]
We say that 
$X$ is a {\em king space\/} if
every continuous weak selection $\varphi$ on $X$ has a $\varphi$-king; or equivalently,
if $K_\varphi\not=\emptyset$ for every continuous 
weak selection $\varphi$ on $X$.
\end{itemize}
\end{definition}

Using our terminology, 
Theorem \ref{classical:king:theorem}
can be 
restated as follows: 
{\em Every 
non-empty
finite discrete space is a king space\/}. 
Our first theorem 
extends 
this result 
to all compact Hausdorff spaces.

\begin{theorem}
\label{kings:in:compact:spaces}
Every 
non-empty
compact Hausdorff space is a king space.
\end{theorem}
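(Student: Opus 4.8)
The plan is to deduce the compact case from Landau's finite theorem (Theorem~\ref{classical:king:theorem}) by a compactness argument. Fix a continuous weak selection $\varphi$ on a non-empty compact Hausdorff space $X$. By~\eqref{eq:K:phi} it suffices to check that the family $\{K_{\varphi,x}:x\in X\}$ has non-empty intersection, and since $X$ is compact this will follow once I verify two things: (a) each $K_{\varphi,x}$ is closed in $X$, and (b) the family $\{K_{\varphi,x}:x\in X\}$ has the finite intersection property.

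For (b) I would argue directly from the classical theorem. Given $x_1,\dots,x_n\in X$, set $F=\{x_1,\dots,x_n\}$ and apply Theorem~\ref{classical:king:theorem} to the finite weak selection $\varphi|_{[F]^2}$ on $F$. The king $z\in F$ it produces satisfies, for each $i$, either $z\to_\varphi x_i$ or $z\to_\varphi y\to_\varphi x_i$ for some $y\in F$; since $x_i\to_\varphi x_i$ always holds (Definition~\ref{def:main}(i)), in either case $z\in K_{\varphi,x_i}$, so $z\in\bigcap_{i=1}^{n}K_{\varphi,x_i}$.

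The real work is (a), and here the plan is to realize $K_{\varphi,x}$ as a projection of a compact set. Let $E=\{(u,v)\in X\times X:u\to_\varphi v\}$. The crucial step is to show that $E$ is closed in $X\times X$. If $(u_0,v_0)\notin E$, then $u_0\neq v_0$ and $\varphi(\{u_0,v_0\})=u_0$; choosing disjoint open sets $W\ni u_0$ and $V'\ni v_0$ and feeding $W$ into the continuity of $\varphi$ at the pair $\{u_0,v_0\}$, I would obtain open $U_0\ni u_0$ and $V_0\ni v_0$ with $\varphi(\{u,v\})\in W$ whenever $u\in U_0$, $v\in V_0$, $u\neq v$; then $U=U_0\cap W$ and $V=V_0\cap V'$ are disjoint open neighbourhoods on which every pair $(u,v)$ has $u\neq v$ and $\varphi(\{u,v\})\in W$, forcing $\varphi(\{u,v\})=u$ (as $v\notin W$) and hence $(u,v)\notin E$. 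Once $E$ is closed, $L_x=\{y\in X:y\to_\varphi x\}$ is closed, being the preimage of $E$ under $y\mapsto(y,x)$, and
\[
K_{\varphi,x}=\{z\in X:\exists\,y\in L_x\text{ with }z\to_\varphi y\}=\pi\bigl(E\cap(X\times L_x)\bigr),
\]
where $\pi:X\times X\to X$ is the first projection. Since $E$ and $X\times L_x$ are closed subsets of the compact space $X\times X$, their intersection is compact, so $K_{\varphi,x}$, being its continuous image, is a compact — hence closed — subset of the Hausdorff space $X$. Combining (a) and (b) with the compactness of $X$ yields $K_\varphi=\bigcap_{x\in X}K_{\varphi,x}\neq\emptyset$.

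I expect the main obstacle to be the closedness of $E$. The subtlety is that the definition of a continuous weak selection constrains $\varphi(\{u,v\})$ only for $u\neq v$, so before invoking continuity one must pass to neighbourhoods that are pairwise disjoint, and one must separately arrange that the unwanted value $v$ is excluded from the target neighbourhood $W$ in order to pin down $\varphi(\{u,v\})=u$. Everything else — the finite-intersection step and the slice and projection arguments — is routine once this lemma is in hand.
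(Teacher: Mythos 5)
Your proposal is correct and follows essentially the same route as the paper: show each $K_{\varphi,x}$ is closed by writing it as the projection of a compact set built from the closed relation $\{(u,v):u\to_\varphi v\}$, derive the finite intersection property from Landau's finite theorem applied to restrictions of $\varphi$, and conclude by compactness. The only differences are cosmetic --- you work with a slice in $X\times X$ where the paper works in $X^3$, and you spell out the closedness of the relation $E$, which the paper simply asserts from continuity of $\varphi$ and the Hausdorff property.
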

\begin{proof}
Let $X$ be a 
non-empty
compact Hausdorff space, and let $\varphi$ be a continuous 
weak selection on $X$.
We must prove that $K_\varphi\not=\emptyset$.

\begin{claim}
\label{K:is:closed}
$K_{\varphi,x}$ is closed in $X$ for every $x\in X$.
\end{claim} 
\begin{proof}
Since $X$ is Hausdorff and
$\varphi$ is continuous, 
$F=\{(a,b)\in X^2: a\to_\varphi b\}$
is a closed subset of $X^2$.
Then $F\times X$ and $X\times F$ are closed subsets of $X^3$, and so is
their intersection
\begin{equation}
\label{eq:Phi}
P=(F\times X)\cap (X\times F)
=
\{(a,b,c)\in X^3: a\to_\varphi b\to_\varphi c\}.
\end{equation}

Let $x\in X$. Since 
the set $X\times X\times \{x\}$ is closed in $X^3$,
the set  
\begin{equation}
\label{eq:P}
Q=P\cap (X\times X\times \{x\})
\end{equation}
 is also closed in $X^3$.
Since $X^3$ is compact, 
$Q$ is compact as well.
Thus,
$\pi(Q)$ is compact too, where $\pi:X^3\to X$ is the (continuous) 
projection on the first coordinate.
Since $X$ is Hausdorff, $\pi(Q)$ must be closed in $X$. It remains only to note that
$$
\pi(Q)=\{z\in X: \exists\ y\in X\ (z\to_\varphi y\to_\varphi x)\}=K_{\varphi,x}
$$
by \eqref{eq:K}, \eqref{eq:Phi} and \eqref{eq:P}.
\end{proof}
\begin{claim}
\label{fip}
The family $\{K_{\varphi,x}:x\in X\}$ has the finite intersection property.
\end{claim}
\begin{proof}
Let $S$ be 
a non-empty
finite subset of $X$.
The restriction $\psi$  of $\varphi$ to $[S]^2$
is a 
weak selection on $S$. 
By 
Theorem \ref{classical:king:theorem},
there exists a $\psi$-king; 
that is, $\bigcap\{K_{\psi,x}:x\in S\}\not=\emptyset$.
Note that
$K_{\psi,x}\subseteq K_{\varphi,x}$ for every $x\in S$, so 
$\bigcap\{K_{\varphi,x}:x\in S\}\not=\emptyset$
as well.
\end{proof}
Since $X$ is compact, from Claims \ref{K:is:closed}
and \ref{fip} we conclude that 
$K_\varphi=\bigcap\{K_{\varphi,x}:x\in X\}\not=\emptyset$.
\end{proof}

\begin{remark}
The proof of Claim \ref{K:is:closed}
is a straightforward adaptation to our needs of the proof of Theorem 2.2 in \cite{McGehee} which asserts that the composition of two closed relations on a compact Hausdorff space $X$ is a 
closed relation on $X$. The latter result itself 
is a corollary of a more general
Theorem 2.6 in \cite{Magill} which states that the composition of two compact relations on a Hausdorff space $X$ is a compact relation on $X$.
\end{remark}

\section{King spaces are often compact}

In this section we shall obtain a partial converse to Theorem \ref{kings:in:compact:spaces}; see Corollary \ref{linearly:ordered:king} and Theorem \ref{reverse:theorem}. In order to do this, 
we first establish some general properties of king spaces.

\begin{lemma}
\label{disjoint:sum}
Let $X$ be a king space having a continuous weak selection.
Then every 
non-empty
clopen subset of $X$ is a king space as well.
\end{lemma}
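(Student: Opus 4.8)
The plan is to extend an arbitrary continuous weak selection $\psi$ on a given non-empty clopen set $C\subseteq X$ to a continuous weak selection $\varphi$ on all of $X$ so that (a)~every $\varphi$-king is forced to lie in $C$, and (b)~the restriction of $\varphi$ to $[C]^2$ is $\psi$. Fix a continuous weak selection $\sigma$ on $X$; it exists by hypothesis, and this is exactly where that hypothesis enters. Define $\varphi\colon[X]^2\to X$ by three cases: $\varphi(\{a,b\})=\psi(\{a,b\})$ when $a,b\in C$; $\varphi(\{a,b\})=\sigma(\{a,b\})$ when $a,b\in X\setminus C$; and, when exactly one of $a,b$ lies outside $C$, let $\varphi(\{a,b\})$ be that element (so that, in tournament language, every member of $C$ beats every member of $X\setminus C$). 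Clearly $\varphi$ is a weak selection.

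Next I would check that $\varphi$ is continuous. Fix $\{a,b\}$ with $a\ne b$ and an open neighbourhood $W$ of $\varphi(\{a,b\})$, and split into the same three cases. If $a,b\in C$, the neighbourhoods supplied by continuity of $\psi$ on the open subspace $C$ are already open in $X$ and do the job. If $a,b\in X\setminus C$, intersect the neighbourhoods supplied by continuity of $\sigma$ with the open set $X\setminus C$; here clopenness of $C$ is used. If, say, $a\in C$ and $b\notin C$, take $U=C$ and $V=W\cap(X\setminus C)$ and note that $\varphi$ sends every admissible pair from $U\times V$ to its member lying in $V$, hence into $W$. Thus $\varphi$ is a continuous weak selection on $X$, and, $X$ being a king space, there exists a $\varphi$-king $z$.

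It remains to show $z\in C$ and that $z$ is a $\psi$-king on $C$. The key observation, which I would use twice, is: if $u\to_\varphi v$ with $v\in C$ and $u\ne v$, then $u\in C$, for otherwise the pair $\{u,v\}$ straddles $C$ and $\varphi(\{u,v\})=u\ne v$, contradicting $u\to_\varphi v$. Fix any $x\in C$ (possible as $C\ne\emptyset$). Since $z$ is a $\varphi$-king, there is $y$ with $z\to_\varphi y\to_\varphi x$; applying the observation to $y\to_\varphi x$ forces $y\in C$ (the case $y=x$ being trivial), and then applying it to $z\to_\varphi y$ forces $z\in C$. So $z\in C$; moreover, for each $x\in C$ the witness $y$ obtained this way lies in $C$, and since $z,y,x$ all lie in $C$ the relations $z\to_\varphi y$ and $y\to_\varphi x$ are, by the first clause in the definition of $\varphi$, precisely $z\to_\psi y$ and $y\to_\psi x$. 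Hence $z$ is a $\psi$-king and $C$ is a king space.

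The only real work is the continuity verification of $\varphi$ together with choosing the correct orientation on pairs straddling $C$: the members of $C$ must dominate the complement (not the reverse), since that is what both pushes every $\varphi$-king into $C$ and lets the two-step route ``$z\to_\varphi y\to_\varphi x$'' be inherited by $\psi$; once this is set up correctly, the rest is routine bookkeeping.
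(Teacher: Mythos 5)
Your proposal is correct and is essentially the paper's own argument: the same three-case extension of the selection on $C$ by a fixed selection on $X$ (with members of $C$ beating members of $X\setminus C$ on straddling pairs), the same observation that anything $\varphi$-dominating a point of $C$ must lie in $C$, applied twice to the chain $z\to_\varphi y\to_\varphi x$ to push the $\varphi$-king into $C$ and make it a king for the given selection on $C$. The only difference is that you spell out the routine continuity verification that the paper leaves as ``it easily follows.''
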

\begin{proof}
Fix a continuous weak selection $\psi$ on $X$.
Let $U$ be a 
non-empty
clopen subset of $X$.
Consider 
an arbitrary 
continuous weak 
selection 
$\xi$
on $U$. 
Our goal is to find a $\xi$-king.
Define the map $\varphi:[X]^2\to X$ by 
\begin{equation}
\label{eq:disjoint:sum}
\varphi(\{a,b\})=
\left\{ \begin{array}{ll}
\xi(\{a,b\}), & \mbox{ if } a,b\in U\\
\psi(\{a,b\}), & \mbox{ if } a,b\in X\setminus U\\
b, & \mbox{ if } a\in U \mbox{ and } b\in X\setminus U
\end{array} \right.
\hskip10pt
\mbox{for $\{a,b\}\in[X]^2$.}
\end{equation}
Since $U$ and $X\setminus U$ are open in $X$, and both 
maps $\xi$ and $\psi$ are continuous, it easily follows that 
$\varphi$ is a continuous weak selection on $X$.
Since $X$ is a king space, there exists
a $\varphi$-king
$z\in X$. 
\begin{claim}
\label{new:claim}
If $a\in U$, $b\in X$ and $b\to_\varphi a$, then $b\in U$ and $b\to_\xi a$. 
\end{claim}
\begin{proof}
Since $a\to_\xi a$ holds, without loss of generality, we shall assume that 
$b\not=a$. From $b\to_\varphi a$ and Definition \ref{def:main}(i), we get $\varphi(\{a,b\})=a$,
and so $b\in U$ by \eqref{eq:disjoint:sum}.
Since $a,b\in U$, applying \eqref{eq:disjoint:sum} once again,
we obtain $a=\varphi(\{a,b\})=\xi(\{a,b\})$,
which implies $b\to_\xi a$. 
\end{proof}
Let $x\in U$ be arbitrary. (Note that at least one such $x$ exists, as $U\not=\emptyset$.)
Since $z$ is a $\varphi$-king,
$z\to_\varphi y\to_\varphi x$ for some $y\in X$.
Applying Claim \ref{new:claim} twice,
we consequently get $y,z\in U$ and $z\to_\xi y\to_\xi x$.
This proves that $z\in U$ is a $\xi$-king.
\end{proof}

Recall that a {\em linearly ordered space\/} is a topological 
space $X$ equipped with a linear order $<$ such that the family 
$\{\{x\in X: x<p\}:p\in X\}\cup \{\{x\in X: x>p\}:p\in X\}$
is a subbase for the topology of $X$.
This order $<$ is said to {\em generate\/} the topology of $X$.  

\begin{lemma}
\label{reverse:lemma}
No 
proper dense subspace of a linearly ordered space is a king space.
\end{lemma}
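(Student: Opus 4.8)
The plan is to build, on an arbitrary proper dense subspace $X$ of a linearly ordered space $L$, a continuous weak selection with no king, the idea being to delete a point of $L$ and ``rotate'' the order so that this point becomes a missing supremum. Fix $p\in L\setminus X$. Since $X$ is dense, $p$ cannot be isolated in $L$ (otherwise $\{p\}$ would be a non-empty open set disjoint from $X$), and a non-isolated point of a linearly ordered space is a limit from the left or from the right; passing to the reverse order of $L$ if necessary (which changes neither the topology of $L$, nor the density of $X$, nor the property of being a king space), we may assume $p$ is a limit of $L$ from the left. Using density once more, $A:=\{x\in X:x<p\}$ is then non-empty with $\sup_L A=p$, so $A$ has no largest element (a largest element of $A$ would be an upper bound of $A$ lying in $A\subseteq\{y\in L:y<p\}$, contradicting $\sup_L A=p$). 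Put $B:=\{x\in X:x>p\}$; as $p\notin X$ we have $X=A\cup B$, and both $A=X\cap\{y\in L:y<p\}$ and $B=X\cap\{y\in L:y>p\}$ are clopen in $X$.

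I would then equip $X$ with the linear order $\prec$ that places every element of $B$ below every element of $A$ and coincides with $<$ within $A$ and within $B$ --- informally, $(X,\prec)$ is ``$B$ followed by $A$'' --- and set $\varphi(\{a,b\})=\min_\prec\{a,b\}$ for $\{a,b\}\in[X]^2$. The key order-theoretic point is that $(X,\prec)$ has no $\prec$-greatest element: such an element would have to lie in the non-empty top block $A$ and be the $<$-largest element of $A$, which does not exist.

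Assuming for the moment that $\varphi$ is continuous, the lemma follows quickly. For the induced tournament, $a\to_\varphi b$ holds if and only if $b\preceq a$. If some $z\in X$ were a $\varphi$-king, then, since $(X,\prec)$ has no $\prec$-greatest element, we could pick $x\in X$ with $z\prec x$; the king property would give $y\in X$ with $z\to_\varphi y\to_\varphi x$, i.e. $y\preceq z$ and $x\preceq y$, whence $x\preceq z$, contradicting $z\prec x$. Thus $K_\varphi=\emptyset$ (see \eqref{eq:K:phi}) and $X$ is not a king space.

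The one genuinely technical step is the continuity of $\varphi$, which I would verify by distinguishing whether both arguments lie in $A$, both lie in $B$, or one lies in each. If $a\in A$ and $b\in B$ then $\varphi(\{a,b\})=b$; given an open neighbourhood $W$ of $b$, the open sets $U:=A$ and $V:=W\cap B$ do the job, since any $a'\in U$ and $b'\in V$ lie on opposite sides of $p$, forcing $\varphi(\{a',b'\})=b'\in W$. If $a,b\in A$ (the case $a,b\in B$ being symmetric), then on pairs from the clopen set $A$ the map $\varphi$ equals $\min_<$, so it suffices to recall that $\min_<$ is a continuous weak selection on every subspace $Y$ of a linearly ordered space: for $a<b$ in $Y$ and open $W\ni a$, separate $a$ from $b$ either by a point $c\in L$ with $a<c<b$ or, when $a$ and $b$ are neighbours in $L$, by the open rays $\{y\in L:y<b\}$ and $\{y\in L:y>a\}$, after shrinking the neighbourhood of $a$ inside $W$. (Alternatively, one checks that $x\mapsto(0,x)$ for $x\in B$ and $x\mapsto(1,x)$ for $x\in A$ embeds $X$ homeomorphically into the linearly ordered space $\{0,1\}\times L$ with the lexicographic order, carrying $\varphi$ to the restriction of that space's min-selection.) I expect this continuity check --- and, conceptually, the realization that one should push the ``upper part'' $B$ beneath the ``lower part'' $A$ --- to be the only real content; the rest is bookkeeping.
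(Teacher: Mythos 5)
Your proof is correct, but it takes a genuinely different route from the paper's. Both arguments exploit the same core phenomenon --- the missing point $p\in L\setminus X$ leaves a clopen ray of $X$ without an extreme element, so an order selection pointing ``towards $p$'' can have no king --- but they package it differently. The paper works only with the standard selection $\eta=\min$ of the ambient linearly ordered space restricted to one of the clopen rays $X^{\leftarrow}$ or $X^{\to}$, shows that this ray fails to be a king space, and then transfers the failure to all of $X$ via Lemma~\ref{disjoint:sum} (a non-empty clopen subspace of a king space with a continuous weak selection is again a king space), at the cost of a four-case analysis on which ray is non-empty and lacks a maximum or minimum. You instead build a single tournament on all of $X$ by re-ordering: placing $B=X^{\to}$ beneath $A=X^{\leftarrow}$ and taking the minimum for the new order $\prec$; your one WLOG reduction ($p$ is a limit of $L$ from the left, whence $A\neq\emptyset$ has no largest element) replaces the paper's case analysis, and Lemma~\ref{disjoint:sum} is not needed at all. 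The price is that you must verify continuity of the glued selection by hand, which you do correctly; a slightly slicker way to see it is that every $\prec$-ray of $X$ is open in the topology of $X$, so the $\prec$-minimum selection is continuous for exactly the same reason the order selection of a linearly ordered space is. One small caveat: your parenthetical alternative via the lexicographic embedding into $\{0,1\}\times L$ is not quite right as stated, since the map need not be a homeomorphism onto its image (for instance, when $L$ has a minimum lying in $A$ and $B$ is cofinal in $L$, every lexicographic neighbourhood of the image of $\min L$ meets $\{0\}\times B$, whereas $A$ is open in $X$); but since your direct case check carries the argument, this does not affect correctness.
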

\begin{proof}
Let $X$ be a 
proper dense subspace of a linearly ordered space $Y$.
Clearly, $X\not=\emptyset$.
Let $<$ be the linear order on $Y$ generating the topology of $Y$.
Choose $p\in Y\setminus X$ arbitrarily, 
and 
note that 
\begin{equation}
\label{two:rays}
X^{\leftarrow}=\{x\in X: x<p\}
\ \ \mbox{ and }\ \ 
X^{\to}=\{x\in X: p<x\}
\end{equation}
are open subsets of $X$ such that
$X^{\leftarrow}\cap X^{\to}=\emptyset$ and $X=X^{\leftarrow}\cup X^{\to}$.
In particular, both $X^{\leftarrow}$ and $X^{\to}$ are clopen in $X$.
Now
we shall consider 
four
cases.

\smallskip
{\em Case 1\/}. 
{\sl $X^{\leftarrow}\not=\emptyset$ and the ordered set $(X^{\leftarrow},<)$ has no maximal element\/}.
Since $Y$ is a linearly ordered space, it has 
a
continuous weak selection
$\eta$
defined by $\eta(\{a,b\})=\min\{a,b\}$
for $\{a,b\}\in [Y]^2$.
Consider the continuous weak selection
$\mu=\eta\restriction_{[X^{\leftarrow}]^2}$
on 
$X^{\leftarrow}$.
Suppose that $z\in X^{\leftarrow}$ is a $\mu$-king.
Since $(X^{\leftarrow},<)$ has no maximal element,
$z<x$ for some $x\in X^{\leftarrow}$. 
Since $z$ is a $\mu$-king, there exists $y\in X^{\leftarrow}$
with 
$z\to_\mu y\to_\mu x$.
Recalling the definition of $\mu$, we get
$z\ge y\ge x$, in contradiction with $z<x$.
This shows that 
there is no $\mu$-king,
and hence, $X^{\leftarrow}$ 
is not a king space.
Since $X^{\leftarrow}$ is a 
non-empty
clopen subset of 
$X$ and $X$ has the continuous weak selection
$\eta\restriction_{[X]^2}$, 
from Lemma \ref{disjoint:sum}
we conclude that $X$ is not a king space either.

\smallskip
{\em Case 2\/}. 
{\sl $X^{\to}\not=\emptyset$ and the ordered set $(X^{\to},<)$ has no minimal element\/}.
By considering the reverse order on $Y$, one easily reduces this case to 
Case 1.

\smallskip
{\em Case 3\/}. 
{\sl $X^{\leftarrow}=\emptyset$\/}.
In this case $X^{\rightarrow}=X\not=\emptyset$.
If 
$q$
is a minimal element of $(X^{\to},<)$, then 
$V=\{y\in Y: y<q\}$ 
is an open subset of $Y$
such that $p\in V\not=\emptyset$.
Since $X$ is dense in $Y$,
$V\cap X\not=\emptyset$.
Since $V\cap X^{\to}=\emptyset$
by the minimality of $q$,
it follows that
$\emptyset\not=V\cap X= V\cap X^{\leftarrow}\subseteq X^{\leftarrow}$, 
in contradiction
with $X^{\leftarrow}=\emptyset$.
Therefore, $(X^{\to},<)$ does not have a minimal element. That is, Case 3 is reduced to Case 2.

\smallskip
{\em Case 4\/}. 
{\sl $X^{\to}=\emptyset$\/}.
By considering the reverse order on $Y$, one easily reduces this case to 
Case 3.

\smallskip
To finish the proof, it remains to 
observe that the four cases above exhaust all possibilities.
Indeed, suppose that none of Cases  1--4 holds.
Then both $x_1=\max X^{\leftarrow}$ and $x_2=\min X^{\to}$ exist. 
From this and \eqref{two:rays}, we conclude that
$W=\{z\in Y: x_1<z<x_2\}$ is an open subset of $Y$
such that $p\in W\not=\emptyset$ and $W\cap X=\emptyset$.
This contradicts the density of $X$ in $Y$.
\end{proof}

The next corollary provides our first 
partial converse to Theorem \ref{kings:in:compact:spaces}.

\begin{corollary}
\label{linearly:ordered:king}
A linearly ordered king space is compact.
\end{corollary}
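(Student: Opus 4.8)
The plan is to deduce the statement from Lemma~\ref{reverse:lemma}. First note that a king space is automatically non-empty, since on the empty space the unique (empty) weak selection has no king. So let $X$ be a non-empty linearly ordered king space, with order $<$ generating its topology, and assume towards a contradiction that $X$ is not compact. By the standard characterization of compactness for linearly ordered spaces --- such a space is compact precisely when every subset has a supremum in the order --- at least one of the following holds: (a)~$X$ has no smallest element; (b)~$X$ has no largest element; (c)~$X$ has a smallest and a largest element but admits a \emph{gap}, that is, a partition $X=A\cup C$ into a non-empty initial segment $A$ with no largest element and a non-empty final segment $C=X\setminus A$ with no smallest element. (For (c), one may take $A$ to be the complement of the set of upper bounds of any $S\subseteq X$ having no supremum.)

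In each case I would enlarge $X$ to a linearly ordered space $Y$ by adjoining a single new point: a new smallest element in case (a); a new largest element in case (b); in case (c) a point $p$ with $a<p<c$ for all $a\in A$ and $c\in C$. In every case $Y$, with its order topology, is a linearly ordered space, $X\subsetneq Y$, and $X$ is dense in $Y$: the adjoined point fails to be isolated in $Y$, precisely because of the ``no largest/smallest element'' clauses. For instance, in case (c) the singleton $\{p\}$ is not open in $Y$, since $A$ has no largest element, so every neighbourhood of $p$ meets $A\subseteq X$.

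The one delicate point is to verify that the subspace topology $X$ inherits from $Y$ coincides with its original order topology. Since the order of $Y$ restricts to that of $X$, every order-open subset of $X$ is automatically subspace-open, so only the reverse inclusion needs an argument, and it reduces to checking that the traces on $X$ of the subbasic rays of $Y$ that involve the new point are order-open in $X$. In case (c) these traces are exactly $A$ and $C$, which are order-open in $X$ for the very reason that $A$ has no largest and $C$ has no smallest element; cases (a) and (b) are similar but easier. (This is also why one takes the ``gap'' completion here rather than the Dedekind--MacNeille completion: the latter may insert one-sided limit points and ruin the coincidence of topologies.)

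Once $X$ has been realized as a proper dense subspace of the linearly ordered space $Y$, Lemma~\ref{reverse:lemma} shows that $X$ is not a king space, contradicting the hypothesis. Hence $X$ is compact. I expect the topology-matching verification in the third paragraph to be the only genuinely delicate step; everything else is routine bookkeeping about linear orders.
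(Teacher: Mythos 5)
Your proof is correct and follows essentially the same route as the paper's: both realize a non-compact linearly ordered $X$ as a proper dense subspace of a strictly larger linearly ordered space and then invoke Lemma~\ref{reverse:lemma}. The only difference is that the paper passes at once to the full Dedekind compactification $X^+$ (citing the literature for its properties), whereas you adjoin a single new point at one gap or missing endpoint by hand, which makes the density and topology-matching verifications explicit but is otherwise the same idea.
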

\begin{proof}
Let $X$ be a 
linearly ordered 
king 
space and let 
$<$ be the order on $X$ generating its topology.
Let $(X^+,\prec)$ be the Dedekind compactification of $(X,<)$
obtained by 
``filling all gaps'' in $X$; see \cite[Section 6]{GH}
or \cite[3.12.3(b)]{Engelking}.
Then $Y=(X^+,\prec)$ is a compact linearly ordered space 
containing $X$ as its dense subspace.
Now $X=Y$ by Lemma \ref{reverse:lemma}, so $X$ is compact. 
\end{proof}

\begin{lemma}
\label{countable:disjoint:sum}
Let $X$ be a space having a continuous weak selection.
If $X$ admits a partition $\mathscr{U}=\{U_n:n\in\N\}$ into 
pairwise disjoint non-empty open sets $U_n$, then $X$ is not a king space.
\end{lemma}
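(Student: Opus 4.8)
The plan is to use the partition $\mathscr{U}=\{U_n:n\in\N\}$ to build a continuous weak selection $\varphi$ on $X$ with $K_\varphi=\emptyset$. The guiding idea is to make ``the block of larger index always win'', so that the tournament associated with $\varphi$ behaves, across blocks, like the transitive tournament on $\N$ in which the larger of two integers beats the smaller; the latter has no king precisely because $\N$ has no largest element (this is exactly the phenomenon exploited in Case~1 of Lemma~\ref{reverse:lemma}).

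First I would record that each $U_n$ is clopen in $X$: it is open by hypothesis, and its complement $\bigcup\{U_m:m\in\N,\ m\neq n\}$ is open as well. Consequently the ``index map'' $a\mapsto n(a)$, where $a\in U_{n(a)}$, is locally constant. Fixing a continuous weak selection $\psi$ on $X$ (which exists by hypothesis), I would then define $\varphi:[X]^2\to X$ by letting $\varphi(\{a,b\})=\psi(\{a,b\})$ whenever $n(a)=n(b)$, and letting $\varphi(\{a,b\})$ be the one of $a,b$ lying in the block of smaller index whenever $n(a)\neq n(b)$. This is clearly a weak selection. For continuity at a pair $a\neq b$: if $n(a)=n(b)=n$, then $\varphi$ agrees with $\psi$ on the open set $U_n$, and continuity of $\psi$ does the job after intersecting the witnessing neighbourhoods with $U_n$; if $n(a)<n(b)$ (the symmetric case being analogous), then $\varphi(\{a,b\})=a$, and given an open $W\ni a$ one may take $U=W\cap U_{n(a)}$ and $V=U_{n(b)}$, since any $a'\in U$ and $b'\in V$ lie in different blocks with $n(a')<n(b')$ and hence satisfy $\varphi(\{a',b'\})=a'\in W$.

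It remains to check that $\varphi$ has no king. Suppose $z\in U_k$ were a $\varphi$-king, and pick $x\in U_{k+1}$ (this block is non-empty since $\mathscr{U}$ is indexed by all of $\N$). Choose $y\in X$ with $z\to_\varphi y\to_\varphi x$ and let $j$ be such that $y\in U_j$. The relation $z\to_\varphi y$ forces $j\le k$: this is immediate if $y=z$ or $n(y)=n(z)$, and if $n(y)\neq n(z)$ then $\varphi(\{z,y\})=y$ means $y$ lies in the smaller-index block, i.e.\ $j<k$. But then $j<k+1$, so $y$ and $x$ lie in distinct blocks and $\varphi(\{y,x\})$ equals the smaller-index element $y\neq x$; thus $y\not\to_\varphi x$, a contradiction. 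Hence $K_\varphi=\emptyset$ and $X$ is not a king space.

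The only point requiring care — rather than an obstacle — is the choice of orientation between distinct blocks: it must be arranged so that the global tournament has no king no matter what the restrictions $\varphi\restriction_{[U_n]^2}$ (inherited from $\psi$) look like, and ``larger index wins'' achieves this because the king-candidate $z$ in block $k$ can then never reach, in two steps, any element sitting in a block of index exceeding $k$. The clopenness of the blocks is what makes the pasting of $\psi$ with this orientation continuous at cross-block pairs.
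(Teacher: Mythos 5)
Your proposal is correct and follows essentially the same route as the paper: fix a continuous weak selection $\psi$, define $\varphi$ to agree with $\psi$ within each block and to select the element of smaller index across blocks (so that $z\to_\varphi y$ forces $n(z)\ge n(y)$), and derive a contradiction from a putative king $z\in U_k$ by taking $x\in U_{k+1}$. Your explicit verification of continuity and of the clopenness of the blocks merely fills in details the paper leaves as an easy check.
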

\begin{proof}
Fix a continuous weak selection 
$\psi$ on $X$.
Since $\mathscr{U}$ is a partition of $X$,
for every $x\in X$ there exists a unique $n(x)\in\N$ such that
$x\in U_{n(x)}$.
Define the map $\varphi:[X]^2\to X$ as follows:
\begin{equation}
\label{defining:varphi}
\varphi(\{a,b\})=
\left\{ \begin{array}{ll}
a, & \mbox{ if } n(a)<n(b)  \\
b, & \mbox{ if } n(a)>n(b)  \\
\psi(\{a,b\}), & \mbox{ if } n(a)=n(b)
\end{array} \right.
\hskip40pt
\mbox{for $\{a,b\}\in[X]^2$.}
\end{equation}
Using 
the continuity of $\psi$ and 
the fact that $\mathscr{U}$ is a clopen partition of $X$,
one can easily check that $\varphi$ is a continuous 
weak selection on $X$.

Suppose that some $z\in X$ is a $\varphi$-king.
Let $m=n(z)$. Since $U_{m+1}\not=\emptyset$, we can choose 
$x\in U_{m+1}$. Since $z$ is a $\varphi$-king,
$z\to_\varphi y\to_\varphi x$ for some $y\in X$.
From this and \eqref{defining:varphi}, we obtain
$m=n(z)\ge n(y)\ge n(x)=m+1$.
This contradiction shows that there is no
$\varphi$-king, and so 
$X$ is not a king space.
\end{proof}

Recall that a space $X$ is:
\begin{itemize}
\item {\em pseudocompact\/} if every real-valued continuous function defined on $X$ is bounded;
\item
{\em zero-dimensional\/} if $X$ has a base consisting of clopen subsets
of $X$;
\item
{\em locally connected\/} provided that, for every open subset $U$ of $X$
and each point $x\in U$, there exist an open subset $V$ of $X$ and a connected 
subset $C$ of $X$ with $x\in V\subseteq C\subseteq U$;
\item
{\em orderable\/} if there exists a linear order $<$ on $X$ turning it into a linearly ordered space.
\end{itemize}

Now we can present a 
second
partial converse of 
Theorem \ref{kings:in:compact:spaces}.

\begin{theorem}
\label{reverse:theorem}
Let $X$ be a 
Tychonoff 
king space having at least one continuous weak selection. Then $X$ is compact
in each of the following 
cases:
\begin{itemize}
\item[(i)] $X$ is pseudocompact;
\item[(ii)] $X$ is zero-dimensional;
\item[(iii)] $X$ is locally connected.
\end{itemize}
\end{theorem}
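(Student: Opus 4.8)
The plan is to prove each case in contrapositive form: if $X$ is a non-compact Tychonoff space carrying a continuous weak selection and satisfying (i), (ii) or (iii), then $X$ is not a king space. All the needed machinery is already available. Lemma~\ref{countable:disjoint:sum} says that a space with a continuous weak selection that admits a countable clopen partition into non-empty pieces is not a king space; combined with Lemma~\ref{disjoint:sum}, it is enough to exhibit one non-empty clopen subset of $X$ with such a partition. In the same way, Lemma~\ref{reverse:lemma} together with Lemma~\ref{disjoint:sum} reduces matters to exhibiting a non-empty clopen subset of $X$ that is a proper dense subspace of a linearly ordered space. And Corollary~\ref{linearly:ordered:king} settles the linearly ordered situation directly. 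So in each case the proof amounts to producing one of these ``witnesses of non-kingness''.

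Case (iii), $X$ locally connected. In a locally connected space the connected components are clopen and partition $X$. If there are infinitely many of them, pick countably many; their union $W$ is a non-empty clopen subset of $X$, since its complement is a union of the remaining components and hence open, and the chosen components partition $W$ into countably many non-empty clopen sets, so $W$ is not a king space by Lemma~\ref{countable:disjoint:sum} --- contradicting Lemma~\ref{disjoint:sum}. Hence $X$ has finitely many components. Each component $C$ is clopen, so it is a king space by Lemma~\ref{disjoint:sum}, it inherits a continuous weak selection by restriction, and it is connected, locally connected and Tychonoff. Since a finite union of compact clopen pieces is compact, it suffices to show that such a $C$ is compact. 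A connected space with a continuous weak selection is weakly orderable --- this is the standard argument, where the ``lower'' and ``upper'' sets $\{x\in C: x\to_\varphi p\}$ and $\{x\in C: p\to_\varphi x\}$ are closed (as in the proof of Claim~\ref{K:is:closed}) and connectedness forces the weak selection to be transitive --- and a weakly orderable, connected, locally connected space is in fact a linearly ordered space, i.e.\ the order topology coincides with the given topology. Then $C$ is compact by Corollary~\ref{linearly:ordered:king}, so $X$ is compact.

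Case (ii), $X$ zero-dimensional. Here there are no non-trivial connected subsets, so the component argument collapses; instead the key point is that a zero-dimensional space with a continuous weak selection is suborderable, i.e.\ homeomorphic to a subspace of a linearly ordered space. This is the crux of this case; it should be extractable from the clopen structure of $X$ together with the clopen sets of the form $\{a\in B:\varphi(\{a,p\})=p\}$ (for $B$ clopen and $p\notin B$), or else quoted from the literature on weak selections. (One does not actually need full suborderability: it is enough that a non-compact such $X$ contain a clopen subset with a countable clopen partition into non-empty pieces, or a clopen subset that is a proper dense subspace of a linearly ordered space.) Granting suborderability, $X$ embeds in a linearly ordered space, and after replacing that space by the closure of the image of $X$ in it we may assume $X$ is a dense subspace of a linearly ordered space $Y$. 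If $X\neq Y$ then $X$ is a proper dense subspace of $Y$, so $X$ is not a king space by Lemma~\ref{reverse:lemma}, contrary to hypothesis; hence $X=Y$ is itself linearly ordered, and Corollary~\ref{linearly:ordered:king} gives compactness.

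Case (i), $X$ pseudocompact --- the main obstacle. Now $X$ carries no obvious order or clopen-partition structure, and, since a pseudocompact space cannot be split into countably many non-empty open sets, Lemma~\ref{countable:disjoint:sum} is unusable. The plan is nonetheless to reduce to the linearly ordered situation. One route is to show that pseudocompactness together with the existence of a continuous weak selection forces $X$ to be countably compact (this should exclude the pseudocompact-but-not-countably-compact examples), and then that a countably compact space with a continuous weak selection is suborderable; after that the dichotomy of Case (ii) applies word for word, with Lemma~\ref{reverse:lemma} and Corollary~\ref{linearly:ordered:king} finishing the argument. An alternative route passes through $\beta X$: since $X$ is pseudocompact, $X\times X$ is pseudocompact and $\beta(X\times X)=\beta X\times\beta X$ by Glicksberg's theorem, so one attempts to extend the weak selection to a continuous weak selection $\tilde\varphi$ on the compact king space $\beta X$, take a $\tilde\varphi$-king there, and push it back into $X$ using the $G_\delta$-density of $X$ in $\beta X$ and the closedness of the relation $\to_{\tilde\varphi}$ --- essentially re-running the finite-intersection argument of Theorem~\ref{kings:in:compact:spaces} inside $\beta X$ and verifying that the relevant closed sets meet $X$. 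Either way, the delicate part --- and where I expect the real work to lie --- is controlling the interplay between pseudocompactness and the selection: proving the structural claim (countable compactness, or suborderability, or that $\tilde\varphi$ is a genuine weak selection), because pseudocompactness is a far weaker hypothesis than zero-dimensionality or local connectedness and gives no direct handle on $\varphi$.
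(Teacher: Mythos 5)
Your case (iii) matches the paper's argument: finitely many components, each clopen hence a king space by Lemma~\ref{disjoint:sum}, each orderable (the paper quotes \cite[Lemma 11]{Nogura:Shakhmatov} for ``connected + locally connected + continuous weak selection $\Rightarrow$ orderable''), then Corollary~\ref{linearly:ordered:king}. That part is fine. The problems are in (i) and (ii), where the decisive steps are left as unproved structural claims.

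For (ii) you make the suborderability of a zero-dimensional space with a continuous weak selection the ``crux'' and offer no proof or precise citation; this is a heavy claim (the general weak-orderability question for spaces with continuous weak selections has a negative answer, so nothing here is automatic), and even granting it, your reduction to Lemma~\ref{reverse:lemma} is not airtight: the closure of $X$ in an ambient linearly ordered space is in general only a generalized ordered space, not itself a linearly ordered space, so the lemma does not directly apply. You have in fact missed the elementary argument: if $X$ is not pseudocompact then, being Tychonoff, it has an infinite discrete family of non-empty open sets; zero-dimensionality lets you shrink these to clopen sets, whose union is clopen, and absorbing the complement into one piece gives a countable clopen partition of all of $X$ --- contradicting Lemma~\ref{countable:disjoint:sum} directly (no appeal to Lemma~\ref{disjoint:sum} needed). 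Hence $X$ is pseudocompact and (ii) reduces to (i). For (i) you explicitly concede that the key step is not done, and neither of your two routes closes it. The second route is moreover aimed in the wrong direction: producing a king in $X$ is the \emph{hypothesis}, not the goal, so extending $\varphi$ to $\beta X$ and pushing a king back into $X$ proves nothing about compactness. The paper's proof of (i) is short but rests on two nontrivial imported theorems: $X\times X$ is pseudocompact for a pseudocompact $X$ with a continuous weak selection \cite[Theorem 2.3]{GF-Sanchis}, whence $\beta X$ is \emph{orderable} \cite{Miya} (see also \cite[Theorem 1.16]{Artico:etc}); then $X$ is a dense subspace of the linearly ordered space $\beta X$, so Lemma~\ref{reverse:lemma} forces $X=\beta X$. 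Without that orderability theorem (or a proof of your countable-compactness/suborderability claim), case (i) --- and with it your version of case (ii) --- remains open.
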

\begin{proof}
(i)
Without loss of generality, we may assume that $|X|\ge 2$.
By
\cite[Theorem 2.3]{GF-Sanchis}, $X\times X$ is also pseudocompact,
so 
the Stone-\v{C}ech compactification $\beta X$ of $X$ is orderable
by the main result of \cite{Miya}; see also \cite[Theorem 1.16]{Artico:etc}.
Applying Lemma \ref{reverse:lemma}, we conclude that $X=\beta X$. 
In particular, $X$ is compact.

\smallskip
(ii)
Suppose that $\{V_n:n\in\N\}$ is a discrete family of non-empty open subsets of $X$.
Since $X$ is zero-dimensional, for every $n\in\N$ we can choose a non-empty clopen 
subset $U_n$ of $X$ such that $U_n\subseteq V_n$. Clearly, $\{U_n:n\in\N\}$ is also a discrete family in $X$. Since each $U_n$ is a clopen subset of $X$,
the set $W=\bigcup\{U_n:n\in\N\setminus\{0\}\}$ 
is 
clopen in $X$, and so 
is the set $X\setminus W$.
Replacing $U_0$ with the bigger set $X\setminus W$, we obtain a partition
$\mathscr{U}=\{U_n:n\in\N\}$ of $X$ into pairwise disjoint non-empty
clopen subsets of $X$.
Applying Lemma \ref{countable:disjoint:sum}, we conclude that $X$ is a not a king space, in contradiction with our assumption on $X$.
This contradiction shows that every discrete family of non-empty open subsets of $X$ must be finite. 
Since $X$ is Tychonoff, this implies that $X$ is pseudocompact. 
Applying item 
(i),
we conclude that $X$ is compact.

\smallskip
(iii)
Since $X$ is locally connected, all connected components of $X$ are clopen 
in $X$. If $X$ has infinitely many connected components, then by grouping some of them together, if necessary, we can produce a partition $\mathscr{U}=\{U_n:n\in\N\}$ of $X$ into 
pairwise disjoint non-empty open sets $U_n$, in contradiction with Lemma 
\ref{countable:disjoint:sum}.
This shows that $X$ has only finitely many connected components.
Therefore, in order to establish compactness of $X$, it suffices to show
that each connected component $C$ of $X$ is compact. 
By our assumption, $X$ admits a continuous weak selection $\varphi$, so
$C$ is a connected, locally connected space having a continuous 
weak selection 
$\varphi\restriction_{[C^2]}$.
By \cite[Lemma 11]{Nogura:Shakhmatov}, $C$ is orderable.\footnote{Indeed, \cite[Lemma 10(i)]{Nogura:Shakhmatov} coincides with \cite[Lemma 7.2]{Michael}, so Lemma 10(i) in \cite{Nogura:Shakhmatov}
remains valid for continuous weak selections. Since 
the proof of Lemma 11 in \cite{Nogura:Shakhmatov}
uses only the linear order obtained in \cite[Lemma 10(i)]{Nogura:Shakhmatov},
it follows that \cite[Lemma 11]{Nogura:Shakhmatov} also remains valid for continuous weak selections.}
Since $C$ is a non-empty clopen 
subset of the king space $X$ having the continuous weak selection $\varphi$, 
Lemma \ref{disjoint:sum} implies that $C$ is a king space.
Finally, $C$ must be compact by Corollary \ref{linearly:ordered:king}.
\end{proof}

\begin{remark}
A space without a continuous weak selection is trivially a king space. 
Therefore, one cannot expect to obtain a partial converse of Theorem \ref{kings:in:compact:spaces} in the spirit of Theorem \ref{reverse:theorem}
without the assumption that the space $X$ in question has at least one continuous weak selection. 
\end{remark}

\section{King spaces need not be analytic}

One may be tempted to conjecture that the additional assumptions (i)--(iii)
in Theorem \ref{reverse:theorem} can be omitted, thereby providing the full converse 
of Theorem \ref{kings:in:compact:spaces}.
This section is devoted to showing that this is impossible.
Non-compact 
separable metric 
king spaces with exactly two continuous 
weak selections will be constructed in Examples  \ref{example:2} and \ref{example}. 
The former example is complete, while 
the latter example fails to be even analytic.

We shall need some lemmas for the construction of our examples. 
The first lemma provides a simple method of building king spaces.

Let $I=[0,1]$ and let $f:I\to I$
be a function.
Define
\begin{equation}
\label{eq:x:f}
x^f_s=(s,f(s))\in I^2
\ \mbox{ for each }\ 
s\in I.
\end{equation}
Clearly,
\begin{equation}
\label{eq:X:f}
X^f=\{x^f_s: s\in I\}\subseteq I^2
\end{equation}
is the graph of $f$.
We consider $X^f$
with the subspace topology inherited from  $I^2$,
where the latter space is equipped with the usual topology. 

\begin{lemma}
\label{connected:king}
If $f:I\to I$ is a function such that 
$X^f$ is connected, then
$X^f$ is 
a
king space that has precisely two continuous 
weak
selections.
\end{lemma}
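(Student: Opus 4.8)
The plan is to exhibit two continuous weak selections with kings, coming from the natural linear order on $X^f$, and then to show that these are the \emph{only} continuous weak selections; the king‑space conclusion is then immediate. The first‑coordinate projection $\pi\colon X^f\to I$ is a continuous bijection ($\pi$ is injective because $X^f$ is a graph), so if $\prec$ denotes the order on $X^f$ with $x^f_s\prec x^f_t$ iff $s<t$, then the $\prec$‑rays $\{x:x\prec p\}=\pi^{-1}\big([0,\pi(p))\big)$ and $\{x:p\prec x\}=\pi^{-1}\big((\pi(p),1]\big)$ are open in $X^f$. From this I would check directly that $\varphi_{\min}(\{a,b\})=\min_\prec\{a,b\}$ and $\varphi_{\max}(\{a,b\})=\max_\prec\{a,b\}$ are continuous weak selections: for $a\prec b$ and a neighbourhood $W$ of $\varphi_{\min}(\{a,b\})=a$, pick $r$ with $\pi(a)<r<\pi(b)$ and take $U=W\cap\pi^{-1}([0,r))$, $V=\pi^{-1}((r,1])$; the case of $\varphi_{\max}$ is symmetric. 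Since $|X^f|=|I|=\cont$, these two selections are distinct, and each has a king: because $x^f_1\to_{\varphi_{\min}}y$ for \emph{every} $y$, taking $y=x$ gives $x^f_1\to_{\varphi_{\min}}x\to_{\varphi_{\min}}x$ for every $x$, so $x^f_1$ is a $\varphi_{\min}$‑king, and dually $x^f_0$ is a $\varphi_{\max}$‑king. Thus the lemma reduces to the statement that $\varphi_{\min}$ and $\varphi_{\max}$ are the only continuous weak selections on $X^f$.

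So let $\varphi$ be an arbitrary continuous weak selection. For $p\in X^f$ put $A_p=\{x:\varphi(\{p,x\})=p\}$ and $B_p=\{x:\varphi(\{p,x\})=x\}$; continuity of $\varphi$ readily shows that $A_p$ and $B_p$ are open and partition $X^f\setminus\{p\}$. The topological engine of the proof is that all $\prec$‑rays of $X^f$ are connected: for $s\in I$ the set $\pi^{-1}([0,s])$ is closed in $X^f$, and if it split into two relatively clopen pieces the one not containing $x^f_s$ would be clopen in $X^f$ — so $\pi^{-1}([0,s])$ is connected; consequently so are $\pi^{-1}([0,s))=\bigcup_{s'<s}\pi^{-1}([0,s'])$ and, symmetrically, $\pi^{-1}([s,1])$ and $\pi^{-1}((s,1])$, being directed unions of connected sets. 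Now fix a non‑endpoint $p$ (i.e.\ $0<\pi(p)<1$). Each of the two nonempty connected rays at $p$ lies in the disjoint open union $A_p\sqcup B_p$, hence entirely in one of the two pieces; this assigns to $p$ one of four ``types'', according to whether $\varphi$ on pairs through $p$ behaves like $\varphi_{\min}$, like $\varphi_{\max}$, makes $p$ win against everybody, or makes $p$ lose to everybody.

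A short consistency check on the value of $\varphi$ on one pair then shows: each of the ``win against everybody'' and ``lose to everybody'' types is realized by at most one point, and the ``$\varphi_{\min}$‑type'' and the ``$\varphi_{\max}$‑type'' cannot both occur (for a pair of opposite such types one computes two different values for $\varphi$ of that pair). Since there are infinitely many non‑endpoints, all but at most two of them share a single one of the latter two types, say $\varphi_{\min}$‑type; comparing each of the at most two exceptional points against a $\varphi_{\min}$‑type non‑endpoint lying $\prec$‑below it (respectively $\prec$‑above it) eliminates them as well, so \emph{every} non‑endpoint is of $\varphi_{\min}$‑type. This already forces $\varphi(\{a,b\})=\varphi_{\min}(\{a,b\})$ whenever $\{a,b\}$ contains a non‑endpoint; for the sole remaining pair $\{x^f_0,x^f_1\}$, continuity of $\varphi$ forces the value $x^f_0$, since otherwise a neighbourhood of $x^f_1$ witnessing continuity of $\varphi$ at $\{x^f_0,x^f_1\}$ would contain some non‑endpoint $b$ (as $x^f_1$ is not isolated, $X^f$ being connected with more than one point) for which $\varphi(\{x^f_0,b\})=x^f_0$, a contradiction. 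Hence $\varphi=\varphi_{\min}$, and symmetrically $\varphi=\varphi_{\max}$ in the other case; so $X^f$ has exactly two continuous weak selections, each admitting a king, and is a king space.

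The step I expect to carry the real weight is the proof that the $\prec$‑rays of $X^f$ are connected — this is the only place where connectedness of the graph is genuinely used, and it is precisely what makes the ``four types'' dichotomy usable. A secondary, smaller point is the extraction of the value of $\varphi$ on the endpoint pair from continuity (using that the endpoints are not isolated), which is the one place where continuity of $\varphi$ is invoked beyond the routine fact that $A_p$ and $B_p$ are open.
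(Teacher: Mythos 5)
Your argument is correct, but it takes a genuinely different route from the paper on the one non-trivial point. The paper's proof is three lines: it exhibits $\sigma_{\min}$ and $\sigma_{\max}$, observes that $x^f_1$ and $x^f_0$ are their respective kings, and disposes of the claim that there are no other continuous weak selections by citing Lemma~7.2 of Michael's paper \cite{Michael} (which applies to arbitrary connected spaces). You instead prove that uniqueness statement from scratch, exploiting the special structure of $X^f$ as a graph over $I$: the continuous injection $\pi$ makes the order rays open, the relative clopenness argument shows every ray $\pi^{-1}([0,s])$, $\pi^{-1}([s,1])$ (and their open variants) is connected, and then the ``four types'' analysis at non-endpoints, the pairwise consistency checks, and the continuity argument at the pair $\{x^f_0,x^f_1\}$ pin $\varphi$ down to $\varphi_{\min}$ or $\varphi_{\max}$. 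I checked the steps and they all go through; the only loose ends are bookkeeping ones (which of the two exceptional types is killed by a comparison below versus above, and the need to shrink the neighbourhood of $x^f_1$ away from $x^f_0$ so that the point $b$ it produces is really a non-endpoint), both trivially repairable. What the paper's approach buys is brevity and generality --- Michael's lemma works for any connected space, not just graphs over $I$; what yours buys is self-containedness, at the cost of about a page of case analysis that is essentially a hands-on reproof of a special case of that lemma.
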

\begin{proof}
Clearly, $X^f$ has two continuous weak selections $\sigma_{\min}$ and $\sigma_{\max}$ 
defined by 
$$
\sigma_{\min}(\{x^f_s, x^f_t\})
=
x^f_{\min\{s,t\}}
\ \ \mbox{ and }\ \ 
\sigma_{\max}(\{x^f_s, x^f_t\})
=
x^f_{\max\{s,t\}}
$$
for $s,t\in I$, $s\not=t$, respectively.
Since $X^f$ is connected, it follows from \cite[Lemma 7.2]{Michael} that there are no other continuous 
weak selections on $X^f$.
Since 
$x^f_1$ is a (unique) $\sigma_{\min}$-king
and
$x^f_0$ is a (unique) $\sigma_{\max}$-king,
we conclude that $X^f$ is a king space.
\end{proof}

Our first example demonstrates that Theorem \ref{reverse:theorem} fails if one replaces ``locally connected'' with ``connected'' in its item (iii).

\begin{example}
\label{example:2}
Let $f:I\to I$ be the function defined by 
$f(0)=0$ and $f(t)=|\sin(1/t)|$ for $0<t\le 1$.
Then {\em
$X^f$ is a non-compact connected completely metrizable separable king space  
that has precisely two continuous 
weak
selections.\/}
Indeed, $X^f$ is a non-closed connected $G_\delta$-subspace of $I^2$.
The rest follows from Lemma \ref{connected:king}.
\end{example}

Complete metrizability of $X^f$ in this example cannot be strengthened to its local compactness, as the next remark shows.

\begin{remark}
\label{locally:compact:remark}
{\em A locally compact connected king space $X$ having at least one 
continuous weak 
selection is compact\/}.
Indeed,
$X$ has a weaker topology generated by a linear order
\cite[Lemma 7.2]{Michael}. 
Now 
$X$ is orderable 
by \cite[Proposition 1.18]{Artico:etc},
and Corollary 
\ref{linearly:ordered:king} implies that $X$ is compact. 
\end{remark}

Our next lemma offers a technique for building connected spaces of the form $X^f$
for some function $f:I\to I$.
Let $\pi:I^2\to I$ be the projection onto the first coordinate.
We use $\cont$ to denote the cardinality of the continuum.
\begin{lemma}
\label{counter-example:2}
Suppose that $f:I\to I$ is a function 
satisfying the following conditions:
\begin{itemize}
\item[(a)]
$X^f$ is dense in $I^2$;
\item[(b)]
$X^f\cap  F\not=\emptyset$ for every closed subset $F$ of $I^2$ such that $\pi(F)$ has cardinality $\cont$.
\end{itemize}
Then 
$X^f$ is connected.
\end{lemma}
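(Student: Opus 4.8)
The plan is to prove connectedness of $X^f$ by contradiction, assuming $X^f = A \cup B$ is a partition into two non-empty relatively clopen sets, and then manufacturing a closed set $F \subseteq I^2$ that violates condition (b). First I would observe that $A$ and $B$ are separated in $I^2$, so their closures $\overline{A}$ and $\overline{B}$ (taken in $I^2$) are closed sets whose intersection $F = \overline{A} \cap \overline{B}$ meets $X^f$ in neither $A$ nor $B$; since $A \cup B = X^f$, it follows that $F \cap X^f = \emptyset$. The whole game is then to show that $\pi(F)$ has cardinality $\cont$, which by (b) gives the desired contradiction.

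To control $\pi(F)$, I would exploit density (condition (a)) together with a connectedness-type argument at the level of $I^2$. The key point is that $\overline{A} \cup \overline{B} = \overline{X^f} = I^2$ since $X^f$ is dense, so $I^2 = \overline{A} \cup \overline{B}$ is covered by two closed sets, and $F = \overline{A} \cap \overline{B}$ is exactly the "boundary" between them. Because $I^2$ is connected, $F \neq \emptyset$. More importantly, I would argue fiberwise: for each $s \in I$, the vertical fiber $\{s\} \times I$ is connected, so if it is covered by $\overline{A}$ and $\overline{B}$ it must meet $F$ unless it lies entirely in one of them. Thus $\pi(F) \supseteq \{s \in I : (\{s\}\times I) \not\subseteq \overline{A} \text{ and } (\{s\}\times I)\not\subseteq \overline{B}\}$. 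Let $P_A = \{s : \{s\}\times I \subseteq \overline{A}\}$ and $P_B = \{s : \{s\}\times I \subseteq \overline{B}\}$; these are disjoint (their union would force a full square fiber in both closures, but $F$ misses $X^f$ and $x^f_s$ lies in that fiber — actually one must check $P_A \cap P_B \subseteq \pi(F)$ is vacuous since a point of $X^f$ would lie in $F$), so $\pi(F) \supseteq I \setminus (P_A \cup P_B)$ up to a set handled separately. Then I would show $P_A$ and $P_B$ are each closed in $I$ (a fiber lies in a closed set iff each of its points does, and this is a closed condition), disjoint, and that $P_A \cup P_B \neq I$ — for if $I = P_A \cup P_B$ with both closed, connectedness of $I$ forces one of them to be all of $I$, say $P_A = I$, meaning $\overline{A} = I^2$; but then every point of $X^f$, in particular every point of $B$, lies in $\overline{A}$, contradicting that $A$ and $B$ are separated with $B \neq \emptyset$. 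Hence $I \setminus (P_A \cup P_B)$ is a non-empty set, and being the complement of a closed set in $I$ it is open and non-empty, so it has cardinality $\cont$; therefore $\pi(F)$ has cardinality $\cont$.

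With that, condition (b) applied to the closed set $F$ yields $X^f \cap F \neq \emptyset$, contradicting $F \cap X^f = \emptyset$ established above. This completes the proof that no such partition exists, i.e.\ $X^f$ is connected.

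The step I expect to be the main obstacle is pinning down exactly why $\pi(F)$ is large: one must be careful that $F = \overline{A}\cap\overline{B}$ really captures enough of the "transition" fibers and that the exceptional fibers (those lying wholly in $\overline{A}$ or wholly in $\overline{B}$) are confined to a set whose complement still has size $\cont$. The clean way is the fiberwise connectedness argument sketched above, reducing everything to the assertion that two disjoint closed subsets $P_A, P_B$ of the connected space $I$ cannot cover $I$ unless one is empty — and then translating "$P_A = I$" back into a contradiction with the separation of $A$ and $B$. Handling the measure-zero-looking exceptional set correctly (showing it cannot swallow all of $I$) is where the density hypothesis (a) does its real work.
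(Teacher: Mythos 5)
Your proof is correct and is essentially the paper's argument in dual (closure) form: your $F=\overline{A}\cap\overline{B}$ is the paper's closed set $I^2\setminus(U\cup V)$ for the maximal open extensions $U=I^2\setminus\overline{B}$, $V=I^2\setminus\overline{A}$, and both proofs hinge on the same two points, namely that connectedness of the vertical fibers $\{s\}\times I$ forces $\pi(F)$ to contain a non-empty open subset of $I$ (hence to have cardinality $\cont$), after which hypothesis (b) contradicts $F\cap X^f=\emptyset$. The only cosmetic difference is that the paper splits into cases according to whether $\pi(U)\cap\pi(V)$ is empty (handling the empty case directly via the point $x^f_s$, without invoking (b)), whereas you absorb that case into the observation that the closed sets $P_A,P_B$ are disjoint and cannot cover $I$; both routes are sound.
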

\begin{proof}
Let $U'$ and $V'$ be disjoint non-empty open subsets of $X^f$. It suffices to show that 
\begin{equation}
\label{connected:equation}
X^f\setminus(U'\cup V')\not=\emptyset.
\end{equation}
Fix open subsets $U$ and $V$ of $I^2$ such that $U\cap X^f=U'$
and $V\cap X^f=V'$.
Since $X^f$ is dense in $I^2$ by (a),
$U'\cap V'=\emptyset$ yields
$U\cap V=\emptyset$.
Since $U$ and $V$ are non-empty open subsets of $I^2$,
$\pi(U)$ and $\pi(V)$ are non-empty open subsets of $I$.
In particular, the set
$$
W=\pi(U)\cap \pi(V)
$$
is open in $I$.

\begin{claim}
\label{claim:3}
$W\subseteq \pi(F)$, where $F= I^2\setminus (U\cup V)$.
\end{claim}
\begin{proof}
Let $s\in W$. Then $U''=(\{s\}\times I)\cap U$ 
and $V''=(\{s\}\times I)\cap V$ are  disjoint non-empty open subsets 
of the connected space $\{s\}\times I$, which yields
$(\{s\}\times I)\setminus (U\cup V)=(\{s\}\times I)\setminus (U''\cup V'')\not=\emptyset$.
Therefore, $(s,t)\not\in U\cup V$ for some $t\in I$. 
Thus,
$(s,t)\in F$, and so $s=\pi(s,t)\in\pi(F)$. 
\end{proof}

\smallskip
{\em Case 1\/}. 
$W\not=\emptyset$.
Since $W$ is a non-empty open subset of $I$, 
it has cardinality $\cont$. 
From Claim \ref{claim:3} we conclude that $\cont=|I|\ge |\pi(F)|\ge |W|=\cont$;
that is, $|\pi(F)|=\cont$.
Since 
$F$
is closed in $I^2$,
$X^f\cap  F\not=\emptyset$ by 
(b).
Since $X^f\cap  F=X^f\setminus (U\cup V)=X^f\setminus(U'\cup V')$,
this proves \eqref{connected:equation}.

\smallskip
{\em Case 2\/}.
$W=\emptyset$.
In this case $\pi(U)$ and $\pi(V)$ are disjoint non-empty open subsets
 of $I$, 
so
$I\not=\pi(U)\cup \pi(V)$
by the connectedness of $I$.
Choose 
$s\in I\setminus (\pi(U)\cup \pi(V))$.
Then $(\{s\}\times I)\cap  (U\cup V)=\emptyset$.
Since $x^f_s\in \{s\}\times I$
by \eqref{eq:x:f},
it follows that 
$x^f_s\not\in U\cup V$.
Since $U'\subseteq U$ and $V'\subseteq V$, this gives 
$x^f_s\not\in U'\cup V'$ as well.
Since $x^f_s\in X^f$
by \eqref{eq:X:f},
this
proves \eqref{connected:equation}.
\end{proof}

\begin{lemma}
\label{counter-example:3}
For each family $\mathscr{G}$ of subsets of $I^2$ with $|\mathscr{G}|\le\cont$,
there exists a function $f:I\to I$ satisfying 
conditions (a) and (b) of 
Lemma \ref{counter-example:2}
such that 
$X^f\not\in\mathscr{G}$.
\end{lemma}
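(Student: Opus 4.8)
The plan is to build $f$ by transfinite recursion of length $\cont$, deciding the value $f(s)$ for one new argument $s\in I$ at a time so as to meet three kinds of requirements. Fix a countable base $\mathscr{B}$ of $I^2$ consisting of non-empty open boxes $J_1\times J_2$. The requirements are: for each $B\in\mathscr{B}$, that $X^f\cap B\ne\emptyset$; for each closed set $F\subseteq I^2$ with $|\pi(F)|=\cont$, that $X^f\cap F\ne\emptyset$; and for each $G\in\mathscr{G}$, that $X^f\ne G$. Requirements of the first kind force $X^f$ to be dense in $I^2$, i.e.\ condition~(a) of Lemma~\ref{counter-example:2}; those of the second kind give condition~(b); those of the third kind give $X^f\notin\mathscr{G}$. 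Since $I^2$ is second countable, it has at most $\cont$ open subsets and hence at most $\cont$ closed subsets, so there are at most $\cont$ requirements in all, and we may enumerate them as $\{R_\gamma:\gamma<\cont\}$.

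The engine of the recursion is the counting observation that, after fewer than $\cont$ stages, the value $f(s)$ has been committed for fewer than $\cont$ arguments $s$; since $|I|=\cont$, and since every non-empty open subset of $I$ has cardinality $\cont$ (so, in particular, the first factor $J_1$ of any box in $\mathscr{B}$ does), while $|\pi(F)|=\cont$ for the closed sets $F$ we must handle, any such set still contains an argument whose value has not yet been committed. Granting this, each requirement is met at its own stage $\gamma$ as follows. If $R_\gamma$ is $X^f\cap B\ne\emptyset$ with $B=J_1\times J_2$: pick an uncommitted $s\in J_1$ and set $f(s)$ equal to any point of $J_2$. If $R_\gamma$ is $X^f\cap F\ne\emptyset$: pick an uncommitted $s\in\pi(F)$ and set $f(s)=t$ for some $t$ with $(s,t)\in F$. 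If $R_\gamma$ is $X^f\ne G$: should there exist an uncommitted $s$ and a point $t$ with $(s,t)\notin G$, set $f(s)=t$, so that $x^f_s\in X^f\setminus G$; if no such pair exists, then some full vertical line $\{s\}\times I$ lies inside $G$, and since $X^f$ meets every vertical line in exactly one point this already guarantees $X^f\ne G$ with nothing to do.

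After the recursion, $f$ has been committed on at most $\cont$ arguments; extend it to a total function $f:I\to I$ arbitrarily, say by the value $0$ elsewhere. Because a committed value is never changed and never deleted, each requirement remains satisfied at the end, which yields conditions~(a) and~(b) of Lemma~\ref{counter-example:2} together with $X^f\ne G$ for every $G\in\mathscr{G}$, i.e.\ $X^f\notin\mathscr{G}$. The point requiring the most care is the dichotomy handling the requirements $X^f\ne G$: one must exploit that $X^f$ is the graph of a \emph{total} function, hence meets each vertical line in a single point, in order to dispose of those $G$ that happen to contain an entire vertical line. Everything else is the familiar bookkeeping of carrying out $\cont$ many tasks with fewer than $\cont$ commitments made before each task, which I would isolate once as the counting observation above and then simply invoke in each case.
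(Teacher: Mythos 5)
Your proposal is correct and takes essentially the same route as the paper: a transfinite recursion of length $\cont$ that commits one graph point per stage, using the fact that fewer than $\cont$ arguments are committed at each stage to meet the density requirements, to hit each closed $F$ with $|\pi(F)|=\cont$, and to diagonalize against each $G\in\mathscr{G}$, with the case of a $G$ containing a full vertical line disposed of exactly as in the paper (a graph of a total function meets each vertical line in one point). The only differences are organizational: you enumerate all requirements in a single list, whereas the paper handles density in an initial countable stage and splits the remaining tasks between limit ordinals (closed sets) and successor ordinals (members of $\mathscr{G}$).
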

\begin{proof}
Let $\mathscr{V}=\{V_n:n\in\N\}$ be a countable base for $I^2$ such that all
$V_n$ are non-empty.
By induction on $n\in\N$, we can choose $s_n,t_n\in I$ so that
\begin{equation}
\label{s_n:are:pairwise:distinct}
s_n\not= s_m
\ \mbox{ for }\ 
m,n\in\N
\mbox{ with }
m\not=n
\end{equation}
and
\begin{equation}
\label{ii_n}
(s_n,t_n)\in V_n
\ \mbox{ for all }\ 
n\in\N.
\end{equation}

Since the family $\mathscr{F}$ of all closed subsets of $I^2$ has cardinality $\cont$, we can fix an enumeration 
$\mathscr{F}=\{F_\alpha:\alpha$ is a limit ordinal with $\omega\le \alpha<\cont\}$
of $\mathscr{F}$ such that $F_\omega$ is a singleton. 
Since
$|\mathscr{G}|\le\cont$,
 we can fix an enumeration 
$\mathscr{G}=\{G_\alpha:
\alpha$ is a successor ordinal with $\omega\le \alpha<\cont\}$
of $\mathscr{G}$.

Using transfinite induction, for each ordinal $\alpha$
with $\omega\le \alpha<\cont$
we shall choose $s_\alpha,t_\alpha\in I$ satisfying the following conditions:
\begin{itemize}
\item[(i$_\alpha$)]
$s_\alpha\not\in \{s_\beta: \beta<\alpha\}$;
\item[(ii$_\alpha$)]
if $\alpha$ is a limit ordinal and $|\pi(F_\alpha)|=\cont$, then $(s_\alpha,t_\alpha)\in F_\alpha$;
\item[(iii$_\alpha$)]
if $\alpha$ is a successor ordinal and 
$|(\{s_\alpha\}\times I)\cap G_\alpha|<\cont$, then
$(s_\alpha,t_\alpha)\not\in G_\alpha$.
\end{itemize}

To start the induction, choose $s_\omega\in I\setminus \{s_n:n\in\N\}$
arbitrarily and let $t_\omega=0$.
Then (i$_\omega$) and (iii$_\omega$) trivially hold.
Now $|F_\omega|=1$ implies $|\pi(F_\omega)|=1<\cont$, so 
(ii$_\omega$) trivially holds as well.

Suppose now that $\omega<\alpha<\cont$ and $s_\beta,t_\beta\in I$
satisfying conditions (i$_\beta$)--(iii$_\beta$) have already been 
chosen for each ordinal $\beta$ such that $\omega\le \beta<\alpha$.
We shall choose $s_\alpha,t_\alpha\in I$ satisfying (i$_\alpha$)--(iii$_\alpha$).
We consider two cases.

\smallskip
{\em Case 1\/}. 
{\sl $\alpha$ is a limit ordinal\/}.
Let $F=F_\alpha$ if $|\pi(F_\alpha)|=\cont$, and let $F=I^2$ otherwise. 
Then 
$|\{s_\beta:\beta<\alpha\}|\le |\alpha|<\cont=|\pi(F)|$, 
so we can choose $s_\alpha\in \pi(F)\setminus \{s_\beta:\beta<\alpha\}$.
Since $s_\alpha\in \pi(F)$, there exists $t_\alpha\in I$
with $(s_\alpha,t_\alpha)\in F$.
Then 
(i$_\alpha$) and (ii$_\alpha$) are satisfied. The condition (iii$_\alpha$) is vacuous.

\smallskip
{\em Case 2\/}. 
{\sl $\alpha$ is a successor ordinal\/}.
 Since
$|\{s_\beta:\beta<\alpha\}|\le |\alpha|<\cont=|I|$,
we can choose 
$s_\alpha\in I$ satisfying (i$_\alpha$).
If $|(\{s_\alpha\}\times I)\cap G_\alpha|<\cont$, we can 
choose $t_\alpha\in I$ so that $(s_\alpha,t_\alpha)\not \in G_\alpha$.
 Otherwise, we 
choose $t_\alpha\in I$ arbitrarily. By our choice, (iii$_\alpha$) holds.
The condition (ii$_\alpha$) is vacuous.

\smallskip
The inductive construction has been completed.
Now we define $S=\{s_\alpha:\alpha<\cont\}$.
From \eqref{s_n:are:pairwise:distinct} and the fact that (i$_\alpha$) 
holds for every ordinal $\alpha$ with $\omega\le\alpha<\cont$,
we conclude that 
$s_\beta\not= s_\alpha$ whenever $\beta<\alpha<\cont$.
We claim that the function $f:I\to I$ defined by
$$
f(s)=
\left\{ \begin{array}{ll}
0, & \mbox{ if } s\in I\setminus S \\
t_\alpha, & \mbox{ if } s=s_\alpha \mbox{ for some } \alpha<\cont
\end{array} \right.
\hskip40pt
\mbox{for $s\in I$,}
$$
has the required properties.
From our definition of $f$, \eqref{eq:x:f} and \eqref{eq:X:f}, it follows that
\begin{equation}
\label{point:x}
x_{s_\alpha}^f=(s_\alpha,t_\alpha)\in X^f
\ \mbox{ for every ordinal }\ 
\alpha<\cont.
\end{equation}

First, let us
check conditions (a) and (b) of 
Lemma \ref{counter-example:2}.

\smallskip
(a) 
Let $U$ be a non-empty open subset of $I^2$. Since $\mathscr{V}$ is a base of $I^2$, there exists $n\in\N$ such that $V_n\subseteq U$.
From this, \eqref{ii_n} and  
\eqref{point:x}, 
we get
$(s_n,t_n)\in X^f\cap V_n\subseteq X^f\cap U$,
so
$X^f\cap U\not=\emptyset$. This shows that $X^f$ is dense in $I^2$.

\smallskip
(b) 
Let $F$ be a closed subset of $I^2$ such that $|\pi(F)|=\cont$.
Then 
$F\in\mathscr{F}$,
and so $F=F_\alpha$ for some limit ordinal $\alpha$ with $\omega\le\alpha<\cont$.
From \eqref{point:x} and  (ii$_\alpha$), we conclude that
$(s_\alpha,t_\alpha)\in X^f\cap F_\alpha=X^f\cap F$.
This shows that
$X^f\cap  F\not=\emptyset$.

\smallskip
Second, 
suppose that $X^f\in \mathscr{G}$. Then $X^f=G_\alpha$
for some successor ordinal $\alpha$ with $\omega\le\alpha<\cont$.
Note that 
$|(\{s_\alpha\}\times I)\cap G_\alpha|=|(\{s_\alpha\}\times I)\cap X^f|=|\{x^f_{s_\alpha}\}|=1<\cont$ by \eqref{eq:x:f} and \eqref{eq:X:f},
so
$(s_\alpha,t_\alpha)\not\in G_\alpha$
by 
(iii$_\alpha$).
On the other hand, 
$(s_\alpha,t_\alpha)\in X^f$
by \eqref{point:x}.
This shows that $X^f\not=G_\alpha$,
giving a contradiction with $X^f=G_\alpha$.
This contradiction shows that
$X^f\not\in\mathscr{G}$.
\end{proof}

\begin{corollary}
\label{analytic:corollary}
For each family $\mathscr{G}$ of subsets of $I^2$ with $|\mathscr{G}|\le\cont$,
there exists a function $f:I\to I$ 
such that:
\begin{itemize}
\item[(i)] $X^f\not\in\mathscr{G}$;
\item[(ii)] $X^f$ is a dense connected subset of $I^2$;
\item[(iii)] $X^f$ is a king space having exactly two continuous 
weak selections. 
\end{itemize}
\end{corollary}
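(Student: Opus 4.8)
The plan is to assemble the three immediately preceding lemmas, with essentially no extra work.

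First I would apply Lemma \ref{counter-example:3} to the given family $\mathscr{G}$; this is legitimate precisely because $|\mathscr{G}|\le\cont$, which is exactly the hypothesis of that lemma. It produces a function $f:I\to I$ satisfying conditions (a) and (b) of Lemma \ref{counter-example:2} and, simultaneously, $X^f\notin\mathscr{G}$. The latter is conclusion (i), so (i) is done outright.

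Next, condition (a) of Lemma \ref{counter-example:2} literally asserts that $X^f$ is dense in $I^2$, which is half of (ii). For connectedness, I would invoke Lemma \ref{counter-example:2} itself: since $f$ satisfies both (a) and (b), the conclusion of that lemma is that $X^f$ is connected. Combining this with density establishes (ii).

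Finally, for (iii), I would apply Lemma \ref{connected:king}, whose sole hypothesis is that $X^f$ be connected — which we have just verified. That lemma then gives directly that $X^f$ is a king space possessing exactly two continuous weak selections, namely $\sigma_{\min}$ and $\sigma_{\max}$, which is conclusion (iii).

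Since every step is a verbatim application of a result proved above, there is no real obstacle in this corollary; the genuine difficulty was already absorbed into Lemma \ref{counter-example:3}, where the transfinite recursion diagonalizes against $\mathscr{G}$ while arranging the density condition (a) (via the enumeration of a countable base) and the ``large projection'' condition (b) (via the enumeration of closed sets). The only point requiring a moment's care is matching the cardinality bookkeeping: one must note that the family $\mathscr{G}$ appearing here is the same object that Lemma \ref{counter-example:3} accepts, and that $|\mathscr{G}|\le\cont$ is precisely the admissible bound.
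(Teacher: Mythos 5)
Your proposal is correct and matches the paper's proof exactly: apply Lemma \ref{counter-example:3} to obtain $f$ with $X^f\notin\mathscr{G}$ and conditions (a), (b), then Lemma \ref{counter-example:2} for connectedness, then Lemma \ref{connected:king} for the king-space conclusion.
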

\begin{proof}
Given a family $\mathscr{G}$ satisfying the assumptions of our corollary, let
 $f:I\to I$ be the function satisfying the conclusion of Lemma 
\ref{counter-example:3}. 
Then (i) holds.
By Lemma \ref{counter-example:2}, (ii) holds as well.
Finally, (iii) follows from (ii) and Lemma \ref{connected:king}.
\end{proof}

Recall that a continuous image of the irrational numbers 
is called an {\em analytic\/} space. All complete separable metric spaces 
are analytic; in particular, compact metric spaces are analytic.

\begin{example}
\label{example}
Let $\mathscr{G}$ be the family of all analytic subsets of $I^2$.
Since $\mathscr{G}$ has cardinality $\cont$ (see, for example, \cite{Kechris}), 
we can apply Corollary \ref{analytic:corollary} to this $\mathscr{G}$
to get the function $f:I\to I$ as in the conclusion of this corollary.
Then {\em $X^f$ is a connected, separable metric king space having exactly two continuous 
weak selections such that $X^f$ is not analytic\/}. 
In particular, $X^f$ is not completely metrizable.
\end{example}

It follows from Theorem \ref{reverse:theorem}(iii) that connectedness cannot be replaced 
with
local connectedness in 
Examples  \ref{example:2} and \ref{example}.

\section{Open questions}

\begin{problem}
Find a characterization of Hausdorff (Tychonoff) king spaces.
\end{problem}

Recall that a space is {\em locally pseudocompact\/} if every point of it
 has an open neighbourhood whose closure is pseudocompact.  

\begin{question}
Let $X$ be a Hausdorff (Tychonoff) king space that has at least one continuous 
weak selection. 
Must $X$ be compact if it satisfies either of the conditions below?
\begin{itemize}
\item[(i)] $X$ is locally compact;
\item[(ii)] $X$ is locally pseudocompact.
\end{itemize}
\end{question}

By Theorem \ref{reverse:theorem}, 
the answer 
to this question is positive
when $X$ is additionally 
assumed to be either locally connected or zero-dimensional.
According to Remark \ref{locally:compact:remark}, the answer 
to item (i) of this question is also positive when $X$ is  
connected.

\medskip
\noindent
{\bf Acknowledgements.\/}
We are grateful to Tsugunori Nogura for a stimulating discussion
of the proof of Theorem \ref{kings:in:compact:spaces}.
Our special thanks go to Aarno Hohti for 
useful comments on preliminary versions of the paper and supplying 
the reference \cite{Magill}.
We are obliged to Shimpei Yamauchi for helpful remarks.


\begin{thebibliography}{99}

\bibitem{Artico:etc}
G.~ Artico, U.~Marconi, J.~Pelant, L.~Rotter, M.~Tkachenko,
Selections and suborderability,
Fund. Math. 175 (2002), 1--33.

\bibitem{Engelking}
R.~Engelking, General Topology (Sigma Series in Pure Mathematics, vol. 6), Heldermann Verlag (1989).

\bibitem{GF-Sanchis}
S.~Garcia-Ferreira, M.~Sanchis, 
Weak selections and pseudocompactness,
Proc. Amer. Math. Soc. 132 (2004), no.~6, 1823--1825.

\bibitem{GH}
L.~Gillman, M.~Henriksen,
Concerning rings of continuous functions,
Trans. Amer. Math. Soc., 77 (1954), no.~2, 340--362.

\bibitem{Kechris}
A.~S.~Kechris,  Classical Descriptive Set Theory, Springer-Verlag, Berlin, New York  (1995).

\bibitem{Landau}
H.~G.~Landau, On dominance relations and the structure of animal societies, I: Effect of inherent characteristics, Bull. Math. Biol. 13 (1951), 1--19.

\bibitem{Magill}
K.~D.~Magill, Jr.,
Composable topological properties and semigroups of relations,
J. Austral. Math. Soc. 11 (1970), 265--275. 

\bibitem{Maurer}
S.~B.~Maurer,
The king chicken theorems, Math. Mag. 53 (1980), no.~2, 67--80. 

\bibitem{McGehee}
R.~McGehee,
Attractors for closed relations on compact Hausdorff spaces,
Indiana Univ. Math. J. 41, no. 4 (1992), 
1165--1210.

\bibitem{Michael}
E.~Michael,
Topologies on spaces of subsets, Trans. Amer. Math. Soc. 71 (1951),
152--182.

\bibitem{Miya}
K.~Miyazaki, Continuous selections on almost compact spaces, Sci. Math. Jpn. 53 (2001), no.~3, 489--494.

\bibitem{Nogura:Shakhmatov}
T.~Nogura, D.~Shakhmatov,
Characterizations of intervals via continuous selections,
Rend. Circ. Mat. Palermo (2) 46 (1997), no.~2, 317--328. 

\end{thebibliography}
\end{document}